\newtheorem{theorem}{Theorem}[section]
\newtheorem{corollary}{Corollary}[section]
\newtheorem{lemma}{Lemma}[section]
\newtheorem{proposition}{Proposition}[section]
\DeclareMathOperator{\ind}{\mathrm{ind}}
\DeclareMathOperator{\fix}{\mathrm{Fix}}
\DeclareMathOperator{\dist}{\mathrm{dist}}
\newcommand{\cl}[1]{\overline{#1}}
\newcommand{\R}{\mathbb{R}}
\newcommand{\N}{\mathbb{N}}
\newcommand{\lvee}{\mbox{\scalebox{1}[0.35]{$\vee$}}}
\newcommand{\h}[1]{\widehat{#1}}
\renewcommand{\t}[1]{\widetilde{#1}}
\newcommand{\ck}[1]{{\stackrel{\lvee}{#1}}}
\newcommand{\V}{\mathcal{V}}
\newcommand{\dom}[1]{\mathcal{D}(#1)}
\title{
{\bf\Large  Periodic perturbations with delay of autonomous differential equations on 
manifolds}}
\author{{\bf\large Massimo Furi}\vspace{1mm}\\
{\it\small Dipartimento di Matematica Applicata `G.\ Sansone'},\\ {\it\small Universit\`a di
Firenze}, {\it\small Via S.\ Marta 3, 50139 Firenze, Italia}\\
{\it\small e-mail: massimo.furi@unifi.it}\vspace{1mm}\\
{\bf\large Marco Spadini}\vspace{1mm}\\
{\it\small Dipartimento di Matematica Applicata `G.\ Sansone'},\\ {\it\small Universit\`a di
Firenze}, {\it\small Via S.\ Marta 3, 50139 Firenze, Italia}\\
{\it\small e-mail: marco.spadini@unifi.it}\vspace{1mm}}
\begin{document}

\maketitle

\begin{center}
{\bf\small Abstract}

\vspace{3mm}
\hspace{.05in}\parbox{4.5in}
{{\small We apply topological methods to the study of the set of harmonic 
solutions of periodically perturbed autonomous ordinary differential 
equations on differentiable manifolds, allowing the perturbing term to 
contain a fixed delay.

In the crucial step, in order to cope with the delay, we define a suitable 
(infinite dimensional) notion of Poincar\'e $T$-translation operator and 
prove a formula that, in the unperturbed case, allows the computation of its 
fixed point index.}}
\end{center}

\section{Introduction}

In this paper we shall study the set of harmonic solutions to periodic perturbations 
of autonomous ODEs on (smooth) manifolds, allowing for the perturbation to
contain a delay. Namely, given $T>0$, $r\geq 0$ and a manifold $M\subseteq\R^k$,
we will consider the $T$-periodic solutions to
\begin{equation}\label{eqintro}
 \dot x(t)=g\big(x(t)\big)+\lambda f\big(t,x(t),x(t-r)\big),\quad\lambda\geq 0,
\end{equation}
where $g$ is a tangent vector field on $M$ and $f$ is $T$-periodic in $t$ and tangent 
to $M$ in the second variable (the meaning of these terms will be explained in due 
course). Roughly speaking, we will give conditions ensuring the existence of a 
connected component of pairs $(\lambda,x)$, $\lambda\geq 0$ and $x$ a $T$-periodic 
solution to the above equation, that emanates from the set of zeros of $g$ and is not 
compact. We point out that, although this result is valid for $f$ and $g$ merely 
continuous, its proof boils down, by an approximation procedure, to the case when $f$ 
and $g$ are $C^1$.

Carrying out this program requires topological tools like the fixed point index and 
the degree (also called the rotation or characteristic) of a tangent vector field, 
that shall be briefly recalled in Section 2. In fact, in the case when the perturbation 
$f$ is independent of the delay, as in \cite{FS97}, the existence of such a connected 
component of $T$-periodic solutions is based on the computation of the fixed point index 
of the translation operator (at time $T$) associated to the equation \eqref{eqintro} 
when $f$ and $g$ are $C^1$. This computation is derived from a formula (see e.g.\ 
\cite{FS96}) that relates the degree of $g$ with the fixed point index of the finite
dimensional Poincar\'e $T$-translation operator $P$ at time $T$ associated to the 
unperturbed equation $\dot x=g(x)$. However, since in our case the perturbing term $f$ 
contains a delay, the $T$-translation operator $P$ must be replaced by its infinite 
dimensional version. Namely, the operator $Q$ that to any function 
$\varphi\in\t M:=C([-r,0],M)$ associates the function of $\t M$ given by 
$\theta\mapsto x\big(\varphi(0),\theta+T\big)$. Here $x(p,\cdot)$ denotes the unique 
solution to the Cauchy problem
\[
 \dot x=g(x),\quad x(0)=p.
\]
Clearly $P$ and $Q$ are closely related, although they operate in different spaces, only 
one of which is finite dimensional. The relation between these operators is discussed 
in Section 3, where we derive a formula that deduces the fixed point index of $Q$ from 
the degree of the tangent vector field $g$. 

Let us be more precise about the above mentioned formulas for the fixed point indices of
$P$ and $Q$. It has been proved in \cite{FS96} that, given $U\subseteq M$ open, one has
\begin{equation}\label{findexP}
 \ind(P,U)=\deg(-g,U),
\end{equation}
provided that the left hand side member of \eqref{findexP} is defined. Formula 
\eqref{findexP} is a generalization of a result of \cite{CMZ} valid for $M=\R^k$, that
 was related to an earlier theorem by Krasnosel'ski\u\i{} \cite{Kr}. This latter result 
holds for nonautonomous differential equations on manifolds, but requires a rather 
restrictive assumption called \emph{$T$-irreversibility} (which, in our settings, simply 
means that the closure $\cl{U}$ of $U$ in $M$ is compact and the map $p\mapsto x(p,t)$ is 
fixed point free on $\partial U$ for all $t\in (0,T]$).
Equation \eqref{findexP} does away with this heavy assumption and allows, by means of
the properties of Commutativity of the fixed point index and Excision of the degree, to 
deduce a similar formula for $Q$. In fact, given $W\subseteq\t M$ open, we have 
\begin{equation}\label{findexQ}
 \ind(Q,W)=\deg(-g,\ck W),
\end{equation}
provided that the left hand side member is defined. Here $\ck W$ denotes the set of points 
of $M$ that, when regarded as constant functions of $\t M$, belong to $W$.

The formula described above for the computation of the fixed point index of $Q$ allows us,
in Sections 4 and 5, to follow the lines of \cite{FS97} in order to prove our main result
about the connected sets of $T$-periodic solutions $(\lambda,x)$. We point out that in 
Sections 3 and 4 the maps $f$ and $g$ are always considered $C^1$, while in Section 5 the 
merely continuous case is considered.

For what concerns the basic theory of delay differential equations we refer to the book 
\cite{HL} and to the paper \cite{Ol69}.

\section{Preliminaries and notation}
This section is devoted to some facts and notation that will be needed in
this paper. In particular we recall the notions of fixed point index of a map and of 
degree of a tangent vector field.

Let us begin with the fixed point index. We recall that a metrizable space $E$ is an 
\emph{absolute neighborhood retract} (ANR) if, whenever it is homeomorphically embedded 
as a closed subset $C$ of a metric space $X$, there exists an open neighborhood $U$ of 
$C$ in $X$ and a retraction $r:U\to C$ (see e.g.\ \cite{Bo,DuGr}). Polyhedra and 
differentiable manifolds are examples of ANRs. Let us also recall that a continuous map 
between topological spaces is called \emph{locally compact} if it has the property that 
each point in its domain has a neighborhood whose image is contained in a compact set.

Let $E$ be an ANR and let $\psi:\dom{\psi}\to E$ be a locally compact map defined on an 
open subset $\dom{\psi}$ of $E$. Given an open subset $U$ of $\dom{\psi}$, if the set 
$\fix{(\psi,U)}$ of the fixed points of $\psi$ in $U$ is compact, then it is well defined 
an integer, $\ind(\psi,U)$, called \emph{the fixed point index of $\psi$ in $U$}
(see, e.g.\ \cite{DuGr,G,N}). Roughly speaking, $\ind(\psi,U)$ counts algebraically the 
elements of $\fix{(\psi,U)}$. 

The fixed point index turns out to be completely determined by the following four 
properties that, therefore, could be used as axioms (see \cite{Br}). Here, $E$ is an ANR 
and $U\subseteq E$ is open. 
\begin{description}
 \item[Normalization.]{\em Let $\psi:E\to E$ be constant. Then $\ind(\psi,E)=1$.}

 \item[Additivity.]{\em Given a locally compact map $\psi:U\to E$ with $\fix(\psi,U)$ 
compact, if $U_1$ and $U_2$ are disjoint open subsets of $U$ such that 
$\fix(\psi,U)\subseteq U_1\cup U_2$, then}
\[
\ind(\psi,U) = \ind(\psi,U_1)+\ind(\psi,U_2).
\]

 \item[Homotopy Invariance.]{\em Assume that $H:U\times[0,1]\to E$ is an admissible 
homotopy in $U$; that is, $H$ is locally compact and the set 
$\{(x,\lambda)\in U\times[0,1]:H(x,\lambda)=x\}$ is compact. Then}
\[
\ind \big(H(\cdot,0),U \big) = \ind \big(H(\cdot,1),U \big).
\]

\item[Commutativity.]{\em Let $E_1$, $E_2$ be ANRs and let $U_1\subseteq E_1$ and
$U_2\subseteq E_2$ be open. Suppose $\psi_1:U_1\to E_2$ and $\psi_2:U_2\to E_1$ are
locally compact maps. If one of the sets 
\[
 \{x\in \psi_1^{-1}(U_2):\psi_2\circ\psi_1(x)=x\}\quad\text{or}\quad
\{y\in \psi_2^{-1}(U_1):\psi_1\circ\psi_2(y)=y\}
\]
is compact, then so is the other and}
\[
\ind\big(\psi_2\circ\psi_1,\psi_{1}^{-1}(U_2)\big)=
\ind\big(\psi_1\circ\psi_2,\psi_{2}^{-1}(U_1)\big).
\]
\end{description}

It is easily shown that the Additivity Property implies the following two important ones:
\begin{description}
\item[Solution.] Let $\psi:U\to E$ be locally compact with $\fix(\psi,U)=\emptyset$. Then
$\ind(\psi,U)= 0$.
\item[Excision.]{\em Given a locally compact map $\psi:U\to E$ with $\fix(\psi,U)$ compact, 
and an open subset $V$ of $U$ containing $\fix(\psi,U)$, one has 
$\ind(\psi,U) = \ind(\psi,V)$.}
\end{description}

From the Homotopy Invariance and Excision properties one could deduce the following 
property:
\begin{description}
 \item[Generalized Homotopy Invariance.]{\em  Let $W\subseteq E\times[0,1]$ be open. 
Assume that $H:W\to E$ is locally compact and such that the set 
$\{(x,\lambda)\in W:H(x,\lambda)=x\}$ is compact. Let $W_\lambda$ denote the slice 
$W_\lambda:=\{x\in E:(x,\lambda)\in W\}$.  Then, 
$\ind\big(H(\cdot,\lambda),W_\lambda\big)$ does not depend on $\lambda\in[0,1]$.}
\end{description}

In the case when $E$ is a finite dimensional manifold, the fixed point index is 
uniquely determined by the first three properties (see \cite{FPS04}). It is also 
worth mentioning that when $E=\R^n$, $U$ is bounded, $\psi$ is defined on $\cl U$ and
fixed point free on $\partial U$, then $\ind(\psi,U)$ is just the Brouwer degree 
$\deg_B(I-\psi,U,0)$, where $I$ denotes the identity on $\R^n$.

\smallskip
We now recall some basic notions about tangent vector fields on manifolds.

Let $M\subseteq\R^k$ be a manifold. Given any $p\in M$, $T_pM\subseteq\R^k$ 
denotes the tangent space of M at $p$. Let $w$ be a tangent vector field on $M$, that is, 
a (continuous) map $w:M\to\R^k$ with the property that $w(p)\in T_pM$ for any $p\in M$. If
$p\in M$ is such that $w(p)=0$, then the Fr\'echet derivative $w'(p) : T_pM\to\R^k$ maps 
$T_pM$ into itself (see e.g.\ \cite{Mi}), so that the determinant $\det w'(p)$ of $w'(p)$ 
is defined. If, in addition, $p$ is a nondegenerate zero (i.e.\ $w'(p) : T_pM\to\R^k$ is 
injective) then $p$ is an isolated zero and $\det w'(p)\neq 0$.

Let $U$ be an open subset of $M$ in which we assume $w$ admissible for the degree; that is, 
the set $w^{-1}(0) \cap U$ is compact. Then, one can associate to the pair $(w,U)$ an integer, 
$\deg(w,U)$, called the \emph{degree (or characteristic) of the vector field $w$ in $U$}, 
which, roughly speaking, counts (algebraically) the zeros of $w$ in $U$ (see e.g.\ 
\cite{H, Mi, FPS05} and references therein). For instance, when the zeros of $w$ are all 
nondegenerate, then the set $w^{-1}(0)\cap U$ is finite and
\[
\deg(w,U)=\sum_{q\in w^{-1}(0)\cap U}{\rm sign} \det w'(q).
\]
When $M = \R^k$, $\deg(w,U)$ is just the classical Brouwer degree, $\deg_B(w,V,0)$, where 
$V$ is any bounded open neighborhood of $w^{-1}(0) \cap U$ whose closure is in $U$. 
Moreover, when $M$ is a compact manifold, the celebrated Poincar\'e-Hopf Theorem states 
that $\deg (w,M)$ coincides with the Euler-Poincar\'e characteristic $\chi(M)$ of $M$ 
and, therefore, is independent of $w$. 

For the pourpose of future reference, we mention a few of the properties of the degree 
of a tangent vector field that shall be useful in the sequel. Here $U$ is an open subset 
of a manifold $M\subseteq\R^k$ and $g:M\to\R^k$ is a tangent vector field.
\begin{description}
\item[Solution.]
{\em If $(g,U)$ is admissible and $\deg(g,U)\neq 0$, then $g$ has a zero in $U$.}
\item[Additivity.]
{\em Let $(g,U)$ be admissible. If $U_1$ and $U_2$ are two
disjoint open subsets of $U$ whose union contains $g\sp{-1}(0) \cap U$, then}
\[
\deg(g,U) = \deg(g,U_1)+\deg(g,U_2).
\]
\item[Homotopy Invariance.]
{\em Let $h \colon U\times[0,1] \to \R\sp{k}$ be an admissible homotopy of
tangent vector fields; that is, $h(x,\lambda)\in T_xM$ for all $(x,\lambda)
\in U\times [0,1]$ and $h\sp{-1}(0)$ is compact. Then
$\deg\big(h(\cdot,\lambda),U\big)$ is independent of $\lambda$.}
\end{description}
As in the case of the fixed point index, the Additivity Property implies the following
important one:
\begin{description}
 \item[Excision]{\em Let $(g,U)$ be admissible. If $V\subseteq U$ is open and contains 
$g^{-1}(0)\cap U$, then $\deg(g,U) = \deg(g,V)$.} 
\end{description}

\section{Poincar\'e-type translation operators}

Let $M\subseteq\R^k$ be a manifold, and $g:M\to\R^k$ a tangent vector field on $M$. Let 
$f:\R\times M\times M\to\R^k$ be (continuous and) tangent to $M$ in the second variable 
(i.e.\ such that $f(t,p,q)\in T_pM$ for all $(t,p,q)\in\R\times M\times M$). Given $T>0$, 
assume also that $f$ is $T$-periodic in $t$.

Given $r>0$, consider the following delay differential equation depending on a parameter 
$\lambda\geq 0$:
\begin{equation}
\dot x(t)=g\big(x(t)\big)+\lambda f\big(t,x(t),x(t-r)\big).\label{ddern}
\end{equation}

We are interested in the $T$-periodic solutions of \eqref{ddern}. Without loss of 
generality we will assume that $T\geq r$ (\cite{Fr07}). In fact, for $n\in\N$, 
equations \eqref{ddern} and 
\[
\dot x(t)=g\big(x(t)\big)+\lambda f\big(t,x(t),x(t-(r-nT))\big)
\]
have the same $T$-periodic solutions. Thus, if necessary, one can replace $r$ with 
$r-nT$, where $n\in\N$ is such that $0<r-nT\leq T$. 

\smallskip
Let us introduce some notation. 

Given any $X\subseteq\R^k$, $\t X$ denotes the metric space $C\big([-r,0],X)$ with the 
distance inherited from the Banach space $\t\R^k = C([-r,0],\R^k)$ with the usual supremum 
norm. Notice that $\t X$ is complete if and only if $X$ is closed in $\R^k$.
Given any $p\in M$, denote by $\h p\in\t M$ the constant function $\h p(t)\equiv p$ 
and, for any $U\subseteq M$, define $\h U=\big\{\h p\in\t M:p\in U\big\}$. Also, given 
$W\subseteq\t M$, we put $\ck W=\big\{p\in M:\h p\in W\big\}$.

Observe that, for any given $U\subseteq M$, one has $\h U\subseteq\t U$ and
$\ck{\t U}=U$. It is known (see e.g.\ \cite{Ee66}) that $\t M$ is a smooth infinite
dimensional manifold. Actually, it turns out that it is a $C^1$--ANR (see e.g.\
\cite{EF76}), as a $C^1$ retract of the open subset $\t U$ of $\t\R^k$, $U$ being a 
tubular neighborhood of $M$ in $\R^k$. 

Assume now, till further notice, that $g$ is $C^1$. Consider the map $Q$ in $\t M$ 
defined by $Q(\varphi)(\theta)=x\big(\varphi(0),T+\theta\big)$, $\theta\in[-r,0]$, 
where $x(p,\cdot)$ denotes the unique solution of the Cauchy problem
\begin{subequations}\label{ddeg}
\begin{align}
&\dot x(t)=g\big(x(t)\big),\label{nopert}\\
&x(0)=p.
\end{align}
\end{subequations}
Well known properties of differential equations imply that the domain $\dom{Q}$ 
of $Q$ is an open subset of $\t M$. Also, since $T\geq r$, the Ascoli-Arzel\`a 
Theorem implies that $Q$ is a locally compact map (see, e.g.\ \cite{Ol69}).

Observe that the $T$-periodic solutions of \eqref{nopert} are in a one-to-one 
correspondence with the fixed points of $Q$. We will prove a formula (Theorem 
\ref{thgrado} below) for the computation of the fixed point index of the admissible
pairs $(Q,W)$, where $W$ is an open subset of $\dom{Q}$. Clearly, $Q$ is strictly 
related to the $M$-valued Poincar\'e map $P$, given by $P(p)=x(p,T)$, whose domain 
is the open subset $\dom{P}$ of $M$ consisting of those points $p$ such that the 
solution $x(p,\cdot)$ of the above Cauchy problem is defined up to $T$.

We shall need the following result of \cite{FS96} about the fixed point index of $P$.
\begin{theorem}\label{th2.1}
Let $g$ be as above and let $U\subseteq M$ be open and such that $\ind(P,U)$ is
defined. Then, $\deg(-g,U)$ is defined as well and
\[
 \ind(P,U)=\deg(-g,U).
\]
\end{theorem}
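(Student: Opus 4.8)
The plan is to link the tangent vector field $-g$ to the Poincar\'e operator $P$ through the family of time-$t$ translations $x(\cdot,t)$, reading off $\deg(-g,U)$ from the behaviour of this family for $t$ near $0$ and transporting it up to $t=T$ by homotopy, while treating the zeros of $g$ separately from the genuine periodic points. As a preliminary, note that $\deg(-g,U)$ is automatically defined: every zero of $g$ in $U$ is the initial value of a constant solution of \eqref{nopert}, hence a fixed point of $P$, so $g^{-1}(0)\cap U\subseteq\fix(P,U)$; since $g^{-1}(0)$ is closed in $M$, the set $g^{-1}(0)\cap U$ is a closed subset of the compact set $\fix(P,U)$, therefore compact, and $(-g,U)$ is admissible.

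The connection with the degree rests on the asymptotics $x(p,t)=p+t\,g(p)+o(t)$ as $t\to0^{+}$. At a nondegenerate zero $q$ of $g$, the time-$sT$ map $x(\cdot,sT)$ has derivative $I+sT\,g'(q)+o(s)$ along $T_qM$, which (for $s>0$ small, since $g'(q)$ is invertible) has no eigenvalue $1$; hence its local fixed point index equals $\mathrm{sign}\det\big(-sT\,g'(q)+o(s)\big)=\mathrm{sign}\det\big(-g'(q)\big)$, precisely the local contribution of $q$ to $\deg(-g,\cdot)$. After a local perturbation of $g$ to a field with only nondegenerate zeros near $g^{-1}(0)\cap U$ (permissible by Homotopy Invariance of both the index and the degree), this identifies, for each sufficiently small $s>0$, the fixed point index of $x(\cdot,sT)$ near the equilibria with $\deg(-g,U)$. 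What remains is to propagate the index from small $s$ up to $s=1$, that is to $P=x(\cdot,T)$, and to check that the non-equilibrium fixed points of $P$ contribute nothing.

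For the propagation I would localize. The compacta $K_0:=g^{-1}(0)\cap U$ and $K_1:=\fix(P,U)\setminus K_0$ are disjoint, and $g\neq0$ on $K_1$; by Additivity and Excision it suffices to work on disjoint open neighbourhoods $A\supseteq K_0$ and $B\supseteq K_1$ with compact closures contained in $U$ and with $g$ bounded away from $0$ on $\overline{B}$. On $B$ the claim is $\ind(P,B)=0$: since $|g|$ is bounded below on $\overline{B}$, every periodic orbit meeting $\overline{B}$ has minimal period at least some $\tau_0>0$, so for $0<\varepsilon<\tau_0$ the time-shift homotopy $H(p,\sigma)=x\big(p,T+\sigma\varepsilon\big)$ moves each point of $K_1$ strictly along its orbit; a uniform-continuity estimate for the flow on $\overline{B}$, together with the fact that $P$ has no fixed points on $\partial B$, keeps $H$ admissible and drives the index to $0$. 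On $A$, the time-parameter homotopy between $x(\cdot,sT)$ (small $s$) and $P$ is admissible on a small neighbourhood of each nondegenerate equilibrium, giving $\ind(P,A)=\deg(-g,A)=\deg(-g,U)$. Summing, $\ind(P,U)=\ind(P,A)+\ind(P,B)=\deg(-g,U)$.

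The step I expect to be the real obstacle is exactly the admissibility of these homotopies in the absence of the \emph{$T$-irreversibility} hypothesis. As $t$ varies, the fixed points of the time-$t$ map are the points lying on periodic orbits of period $t$, and these may drift towards the boundary of the localizing set; consequently the naive full homotopy $x(\cdot,sT)$, $s\in[0,1]$, need not have compact fixed-point set, which is precisely why the earlier results required $T$-irreversibility. The device that replaces it is to avoid sweeping the time across a large range on the region $B$: the small time-shift, combined with the uniform lower bound $\tau_0$ on minimal periods and the continuous dependence of the flow on compact sets, restores the needed compactness. Verifying that this localization can always be carried out when $\ind(P,U)$ is merely assumed to be defined is the delicate heart of the argument.
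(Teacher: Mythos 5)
First, a remark on the comparison itself: the paper does not prove Theorem \ref{th2.1} at all --- it is quoted verbatim from \cite{FS96}, and the paper's own contribution (Theorem \ref{thgrado}) consists in deducing the infinite-dimensional analogue from it via the Commutativity property. So your attempt has to stand on its own, and it does not: the two steps you yourself flag as delicate are genuinely broken, not just in need of polishing. The first fatal gap is the localization. You call $K_1=\fix(P,U)\setminus K_0$ a compactum and choose disjoint neighbourhoods $A\supseteq K_0$, $B\supseteq K_1$; but $K_1$ is only relatively open in $\fix(P,U)$, not closed, and non-equilibrium $T$-periodic orbits can accumulate on an equilibrium while $\fix(P,U)$ stays compact. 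Concretely, on $M=\R^2$ take, in polar coordinates, $\dot r=0$, $\dot\theta=\omega(r)$ with $\omega(r)=\frac{2\pi}{T}\big(1+r^3\sin(1/r)\big)$ and $\omega(0)=2\pi/T$. This field is $C^1$, its unique zero is the origin, and for $U$ the open disc of radius $1/2$ one has $\fix(P,U)=\{0\}\cup\bigcup_{k\geq 1}\{r=1/(k\pi)\}$, which is compact; the circles of fixed points accumulate at the origin, so no disjoint open sets $A$, $B$ as above exist and the decomposition $\ind(P,U)=\ind(P,A)+\ind(P,B)$ cannot even be written down. (Note the origin here is a \emph{nondegenerate} zero; the accumulation is caused by resonance of $g'(0)$ with the period $T$, so nondegeneracy does not save you.)

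The second fatal gap is the claim that the small time-shift ``drives the index to $0$'' on $B$. Homotopy invariance (when admissible, which your boundary estimate does give if $\overline{B}$ is compact, $\overline B\subseteq U$ and $\fix(P,U)\cap\partial B=\emptyset$) yields only $\ind\big(x(\cdot,T),B\big)=\ind\big(x(\cdot,T+\varepsilon),B\big)$; to conclude that this common value is zero you need the end map $x(\cdot,T+\varepsilon)$ to be fixed point free on $B$, by the Solution property. Your lower bound $\tau_0$ on minimal periods only shows that the points of $K_1$ are displaced; it does not exclude fixed points of $x(\cdot,T+\varepsilon)$ elsewhere in $B$, namely points whose minimal period $\tau\geq\tau_0$ satisfies $T+\varepsilon=k\tau$, $k\in\N$. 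In the example above, take $B$ to be a thin annulus around the (isolated) circle $r=1/\pi$: the nearby invariant circles have periods filling a whole interval around $T$, so for \emph{every} small $\varepsilon$ the map $x(\cdot,T+\varepsilon)$ has an entire circle of fixed points in $B$, and your homotopy never terminates at a fixed-point-free map (the index of this $B$ is indeed $0$, but that needs a different argument). The equilibrium part has the same disease: replacing $g$ by a nearby field with nondegenerate zeros replaces $P$ by the flow of the new field, and the admissibility of that homotopy of Poincar\'e maps --- no $T$-periodic orbits of the perturbed fields crossing $\partial A$ --- is again precisely the unproved point; and the time homotopy $s\mapsto x(\cdot,sT)$ near a zero $q$ is inadmissible on every small ball whenever periodic orbits of period $\leq T$ accumulate at $q$ (purely imaginary eigenvalues of $g'(q)$, Lyapunov-type families). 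These are exactly the obstructions that make the theorem nontrivial without $T$-irreversibility; the actual proofs (\cite{CMZ} for $\R^k$, \cite{FS96} on manifolds) do not proceed by separating equilibria from periodic orbits and homotoping in time, and no argument of that simple form is known to work.
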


There is a simple relation between the domain $\dom{Q}$ of $Q$ and the domain 
$\dom{P}$ of $P$. In fact  $\dom{Q}=\{\varphi\in\t M:\varphi(0)\in\dom{P}\}$. 
In particular, $\t{\dom{P}}\subseteq\dom{Q}$. Observe also that 
$P(p)=Q(\h p)(0)$ for all $p\in\dom{P}$.

\begin{theorem}\label{thgrado}
Let $g$, $T$ and $Q$ be as above, and let $W\subseteq\t M$ be open. If the fixed 
point index $\ind(Q,W)$ is defined, then so is $\deg(-g,\ck W)$ and 
\[
\ind(Q,W)=\deg(-g,\ck W).
\]
\end{theorem}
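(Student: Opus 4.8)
The plan is to factor the infinite-dimensional operator $Q$ through the finite-dimensional manifold $M$ and then invoke the Commutativity property of the fixed point index to transfer the computation to the Poincar\'e map $P$, where Theorem \ref{th2.1} applies. The starting point is the observation that $Q(\varphi)$ depends on $\varphi$ only through the value $\varphi(0)$. This suggests introducing the evaluation map $\epsilon:\t M\to M$, $\epsilon(\varphi)=\varphi(0)$, together with the map $J:\dom P\to\t M$ defined by $J(p)(\theta)=x\big(p,T+\theta\big)$ for $\theta\in[-r,0]$, which is well defined precisely because $T\geq r$. By construction $Q=J\circ\epsilon$ on $\dom Q$, while $\epsilon\circ J=P$, since $\epsilon\big(J(p)\big)=x(p,T)=P(p)$.

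Next I would verify the hypotheses of Commutativity. The map $\epsilon$ is locally compact because $M$, being a manifold, is locally compact: given $\varphi$, choose a relatively compact open neighborhood $O$ of $\epsilon(\varphi)$ in $M$, so that $\epsilon^{-1}(O)$ is a neighborhood of $\varphi$ with $\epsilon\big(\epsilon^{-1}(O)\big)\subseteq O$. The map $J$ is locally compact by the same Ascoli--Arzel\`a argument already invoked for $Q$ (again using $T\geq r$). Taking $E_1=\t M$, $E_2=M$, $\psi_1=\epsilon$, $\psi_2=J$, $U_1=W$ and $U_2=\dom P$, one checks, using $W\subseteq\dom Q=\{\varphi\in\t M:\varphi(0)\in\dom P\}$, that $\psi_1^{-1}(U_2)=W$ with $\psi_2\circ\psi_1=Q$ there, whereas $\psi_2^{-1}(U_1)=V:=\{p\in\dom P:J(p)\in W\}$ with $\psi_1\circ\psi_2=P$ there. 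Since $\fix(Q,W)$ is compact by assumption, Commutativity guarantees that $\fix(P,V)$ is compact as well and that $\ind(Q,W)=\ind(P,V)$. As $P$ maps into the finite-dimensional $M$ it is locally compact, so $\ind(P,V)$ is defined, and Theorem \ref{th2.1} yields $\ind(P,V)=\deg(-g,V)$ with $\deg(-g,V)$ defined.

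It remains to replace $V$ by $\ck W$. The key observation is that a zero $p$ of $g$ is an equilibrium, so $x(p,t)\equiv p$ and hence $J(p)=\h p$, the constant function of value $p$; note also that such $p$ lies in $\dom P$ automatically, equilibria being defined for all time. Consequently, for $p\in g^{-1}(0)$ one has $p\in V\iff\h p\in W\iff p\in\ck W$, whence $g^{-1}(0)\cap V=g^{-1}(0)\cap\ck W$. Moreover $\ck W$ is open in $M$, being the preimage of the open set $W$ under the continuous map $p\mapsto\h p$. Setting $O:=V\cap\ck W$, this common compact zero set is contained in $O$, so two applications of the Excision property of the degree give $\deg(-g,V)=\deg(-g,O)=\deg(-g,\ck W)$, all three being defined. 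Chaining the equalities produces $\ind(Q,W)=\ind(P,V)=\deg(-g,V)=\deg(-g,\ck W)$.

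I expect the main obstacle to be the careful bookkeeping of domains in the application of Commutativity: checking that $\psi_1^{-1}(\dom P)$ coincides exactly with $W$, that the two composites are genuinely $Q$ and $P$ on the relevant sets, and that the factor $J$ into the infinite-dimensional space $\t M$ is locally compact. Once the factorization $Q=J\circ\epsilon$, $P=\epsilon\circ J$ is correctly installed and these technical points are cleared, the identity $J(p)=\h p$ at equilibria renders the concluding Excision step essentially immediate.
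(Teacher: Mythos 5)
Your proof is correct and takes essentially the same route as the paper's: your maps $\epsilon$ (restricted to $W$) and $J$ are precisely the paper's $k|_W$ and $h$, and the chain Commutativity $\Rightarrow$ $\ind(Q,W)=\ind\big(P,h^{-1}(W)\big)$ $\Rightarrow$ Theorem \ref{th2.1} $\Rightarrow$ the equilibrium identity $g^{-1}(0)\cap h^{-1}(W)=g^{-1}(0)\cap\ck W$ $\Rightarrow$ Excision is exactly the paper's argument. The only minor differences are cosmetic: the paper checks admissibility of $(-g,\ck W)$ at the outset via the fixed points $\h p$ of $Q$, while you deduce it at the end, and you spell out the local compactness checks and the double excision through $V\cap\ck W$ that the paper leaves implicit.
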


\begin{proof}
The assumption that $\ind(Q,W)$ is defined means that $W\subseteq\dom{Q}$ and that 
$\fix (Q,W)$ is compact. Let us show that $\deg(-g,\ck W)$ is defined too. We need 
to prove that $g^{-1}(0)\cap\ck W$ is compact. If $p\in g^{-1}(0)\cap\ck W$, then 
the constant function $\h p$ is a fixed point of $Q$. Thus $g^{-1}(0)\cap\ck W$ is 
compact since it can be regarded as a closed subset of the compact set $\fix (Q,W)$.

We now use the Commutativity Property of the fixed point index in order to 
deduce the desired formula for the fixed point index of $Q$ from the analogous one 
for $P$, expressed in Theorem \ref{th2.1}. In order to do so, we define the maps 
$h:\dom{P}\to\t M$ and $k:\t M\to M$ by $h(p)(\theta)=x(p,\theta+T)$ and 
$k(\varphi)=\varphi(0)$, respectively. Clearly, we have
\begin{align}
&(h\circ k)(\varphi)(\theta) = x\big(\varphi(0),\theta+T\big)=Q(\varphi)(\theta),
&&\varphi\in\dom{Q},\quad \theta\in[-r,0],\label{compo1}\\
\intertext{and}
&(k\circ h)(p) = x(p,T)=P(p), && p\in\dom{P}.\label{compo2}
\end{align}

Define $\gamma=k|_W$. By the Commutativity Property of the fixed point index, 
$\ind\Big(h\circ\gamma,\gamma^{-1}\big(\dom{P}\big)\Big)$ is defined if and
only if so is $\ind\big(\gamma\circ h,h^{-1}(W)\big)$, and
\begin{equation}\label{indcomm}
\ind\Big(h\circ\gamma,\gamma^{-1}\big(\dom{P}\big)\Big)
                                 =\ind\big(\gamma\circ h,h^{-1}(W)\big).
\end{equation}

Since $W\subseteq\dom{Q}$, then $\gamma^{-1}\big(\dom{P}\big)=W$. Hence, from 
formulas \eqref{compo1}--\eqref{compo2}, it follows that
\begin{gather}
\ind(Q,W)=\ind\Big(h\circ\gamma,\gamma^{-1}\big(\dom{P}\big)\Big),\label{qcomp}\\
\ind\big(P,h^{-1}(W)\big)=\ind\big(\gamma\circ h,h^{-1}(W)\big).\label{pcomp}
\end{gather}
Thus, by \eqref{indcomm}, we get
\begin{equation}\label{indQPh}
\ind(Q,W)=\ind\big(P,h^{-1}(W)\big).
\end{equation}

From Theorem \ref{th2.1}, we obtain
\begin{equation}\label{vecchiaf}
\ind\big(P,h^{-1}(W)\big)=\deg\big(-g,h^{-1}(W)\big).
\end{equation}
From the definition of $h$ it follows immediately that
\begin{equation*}
g^{-1}(0)\cap\ck W=g^{-1}(0)\cap h^{-1}(W).
\end{equation*}
Therefore, from the Excision Property of the degree of a vector field, one has
\begin{equation}\label{idovvia}
\deg\big(-g,h^{-1}(W)\big)= \deg(-g,\ck W)
\end{equation}
and the assertion follows from equations \eqref{indQPh}, \eqref{vecchiaf} and 
\eqref{idovvia}.
\end{proof}

Let $W\subseteq\dom{Q}$ be open in $\t M$. We point out that Theorems \ref{th2.1} 
and \ref{thgrado} imply that the fixed point index of $Q$ in $W$ actually reduces 
to the fixed point index of the finite dimensional operator $P$ in $\ck W$. Namely,
\begin{equation}\label{frdf}
 \ind(Q,W)=\ind(P,\ck W).
\end{equation}
In fact, $P$ is defined on $\ck{W}$ and $\fix(P,\ck{W})$ can be regarded as a 
closed subset of $\fix(Q,W)$. Therefore, if  $\ind(Q,W)$ is defined, then so is 
$\ind(P,\ck W)$ and, applying Theorems \ref{thgrado} and \ref{th2.1}, we get 
$\ind(Q,W)=\deg(-g,\ck W)=\ind(P,\ck W)$.

Let us remark that the mappings $h$ and $k$, defined in the proof of Theorem \ref{thgrado},
establish a bijection between the fixed point sets of $Q$ and $P$. However, we should 
not think of formula \eqref{frdf} as a trivial consequence of this correspondence. In
fact, given $W$ as in Theorem \ref{thgrado}, we see that $h$ and $k$ induce a one-to-one
correspondence between the fixed points of $Q$ in $W$ and those of $P$ in $h^{-1}(W)$ but, 
in general, $\fix\big(P,h^{-1}(W)\big)\neq\fix\big(P,\ck W\big)$. Observe also that the 
``finite dimensional reduction formula'' \eqref{frdf} has a clear advantage over the more 
crude reduction formula \eqref{indQPh} obtained in the proof of Theorem \ref{thgrado} by 
means of the Commutativity Property of the fixed point index (and that derives from the 
correspondence we just mentioned). In fact, differently from the set $h^{-1}(W)$ that 
appears in \eqref{indQPh}, the open set $\ck W$ does not depend on the equation
\eqref{nopert}.

\section{Branches of starting pairs}

Any pair $(\lambda,\varphi)\in [0,\infty)\times\t M$ is said to be a \emph{starting 
pair} (for \eqref{ddern}) if the following initial value problem has a $T$-periodic 
solution:
\begin{equation}\label{due.uno}
\left\{
\begin{array}{ll}
\dot x(t)=g\big(x(t)\big)+\lambda f\big(t,x(t),x(t-r)\big) &  t>0,\\
x(t)=\varphi(t), & t\in [-r,0].
\end{array}
\right.
\end{equation}
A pair of the type $(0,\h p)$ with $g(p)=0$ is clearly a starting pair and will be called 
a \emph{trivial starting pair}. The set of all starting pairs for \eqref{ddern} will be 
denoted by $S$.

Throughout this section we shall assume that $f$ and $g$ are $C^1$, so that \eqref{due.uno} 
admits a unique solution that we shall denote by $\xi^\lambda(\varphi,\cdot)$. Observe that
$\xi^0\big(\varphi(0),\cdot\big)=x\big(\varphi(0),\cdot\big)$, where, we recall, $x(p,\cdot)$
is the unique solutions of the Cauchy problem \eqref{ddeg}. By known continuous dependence 
properties of delay differential equations the set $\V\subseteq[0,\infty)\times\t M$ given by
\begin{equation*}
\V:=\big\{(\lambda,\varphi): \xi^\lambda(\varphi,\cdot)
                        \textrm{ is defined on $[0,T]$}\big\}
\end{equation*}
is open. Clearly $\V$ contains the set $S$ of all starting pairs for \eqref{ddern}. 
Observe that $S$ is closed in $\V$, even if it could be not so in $[0,+\infty)\times\t M$. 
Moreover, by the Ascoli-Arzel\`a Theorem it follows that $S$ is locally compact.  

In the sequel, given $A\subseteq\R\times\t M$ and $\lambda\in\R$, we will denote the slice 
$\{x\in\t M:(\lambda,x)\in A\}$ by the symbol $A_\lambda$. Observe that $\ck \V_0=\dom{P}$
where $P$ is the Poincar\'e operator defined in the previous section.

In order to study the $T$-periodic solutions of \eqref{eqintro}, it will be convenient to
introduce, for each $\lambda\geq 0$, the map $Q_\lambda:\V_\lambda\to\t M$ given by
\[
 Q_\lambda(\varphi)(\theta)=\xi^\lambda(\varphi,\theta+T),\quad \theta\in[-r,0].
\]
Notice that $Q_0$ coincides with the map $Q$ defined in the previous section.

We will need the following global connectivity result of \cite{FP93}.
\begin{lemma}\label{L.3.1}
Let $Y$ be a locally compact metric space and let $Z$ be a compact subset of $Y$. 
Assume that any compact subset of $Y$ containing $Y_0$ has nonempty boundary. Then 
$Y\setminus Z$ contains a connected set whose closure (in $Y$) intersects 
$Z$ and is not compact.
\end{lemma}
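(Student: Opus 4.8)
The plan is to prove this as a purely topological statement about locally compact metric spaces, with no reference to the differential equations whatsoever. The key conceptual point is that the hypothesis — every compact subset of $Y$ containing $Z$ has nonempty boundary — is precisely the negation of the statement that some compact set containing $Z$ is both open and closed in $Y$. If $Z$ itself, or some compact neighborhood of it, were a connected component of $Y$ that happens to be open, the conclusion would fail; the boundary condition rules this out and thereby forces the existence of a noncompact piece attached to $Z$.

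First I would argue by contradiction. Suppose the conclusion fails: every connected subset of $Y\setminus Z$ whose closure in $Y$ meets $Z$ is in fact compact. Equivalently, consider the connected components of $Y$ that meet $Z$, or the union $C$ of $Z$ together with all connected sets whose closure meets $Z$. The strategy is to show that under the failure of the conclusion one can manufacture a compact set $K$ with $Z\subseteq K$ but $\partial K=\emptyset$, contradicting the hypothesis. The natural candidate for $K$ is a suitable compact neighborhood of $Z$ that is simultaneously open in $Y$; since $Z$ is compact and $Y$ is locally compact metric, $Z$ has a compact neighborhood, and the failure of the conclusion should let one trim this neighborhood down to something clopen by cutting along the (compact) closures of the components emanating from $Z$.

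The main obstacle, and the step I expect to require the most care, is the passage from local compactness to the construction of a genuinely \emph{clopen} compact set containing $Z$. Local compactness gives a compact neighborhood $N$ of $Z$, but $\partial N$ need not be empty. The idea is to examine how the components of $Y$ interact with $N$: by the assumed failure, every component of $Y$ meeting $Z$ has compact closure, and one must assemble finitely many of these (using compactness of $Z$ to extract a finite cover) into a set whose topological boundary in $Y$ vanishes. This is exactly where a separation or patching argument is needed, and one typically invokes a standard lemma — the kind found in \cite{FP93} — stating that in a locally compact space, if no noncompact connected set emanates from a compact set $Z$, then $Z$ admits arbitrarily small compact open neighborhoods. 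I would isolate this as the technical heart of the argument.

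Finally, once such a clopen compact $K\supseteq Z$ is produced, its boundary $\partial K$ in $Y$ is empty (clopen sets have empty boundary), directly contradicting the hypothesis that every compact set containing $Z$ has nonempty boundary. This closes the contradiction and yields the desired connected, noncompact set in $Y\setminus Z$ whose closure meets $Z$. I would note that the connectedness of the extracted set follows by taking a single connected component (rather than the full union $C$), since a component is itself connected and its noncompactness is forced by the same boundary obstruction; the only subtlety is ensuring that the component we select actually lies in $Y\setminus Z$ and has closure meeting $Z$, which we arrange by choosing a component of $Y\setminus Z$ adjacent to a boundary point of the constructed neighborhood.
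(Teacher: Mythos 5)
Your high-level framing is fine as far as it goes: in a metric space a compact set with empty boundary is exactly a compact open (hence clopen) set, so the hypothesis (in which ``$Y_0$'' is evidently a misprint for $Z$) says that no compact clopen subset of $Y$ contains $Z$, and arguing by contradiction is natural. The genuine gap is that the entire mathematical content of the lemma sits in the one step you explicitly decline to prove. The ``standard lemma'' you invoke --- if no noncompact connected set emanates from $Z$, then $Z$ admits a compact open neighborhood --- is not an auxiliary tool: it is, up to the two-line contradiction wrapper you describe, equivalent to the statement under proof, and you attribute it to \cite{FP93}, which is precisely the source of the lemma being proved. (Note that the present paper gives no proof at all: it simply quotes the result from \cite{FP93}, so there is no argument here for you to have matched.) Stripped of the packaging, your attempt is a citation of the result to itself; nothing has been established.

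Moreover, the route you sketch toward that missing step would fail as written. First, the negated conclusion concerns connected subsets of $Y\setminus Z$ whose closure meets $Z$; it says nothing about components of $Y$ meeting $Z$, which are not subsets of $Y\setminus Z$, so the property ``every component of $Y$ meeting $Z$ has compact closure'' is not available to you. Second, and more seriously, in a space that is not locally connected, components (and finite unions of their closures) need not be open, so ``assembling finitely many of them'' gives no control whatsoever on the boundary; this is exactly the component-versus-quasicomponent phenomenon, and no finite covering argument circumvents it. The standard correct proofs pass to a compact ambient space, where that difficulty disappears: one observes that $Y$ cannot be compact (else $Y$ itself is a compact set containing $Z$ with empty boundary), forms the one-point compactification $Y^{*}=Y\cup\{\infty\}$ (compact Hausdorff, since $Y$ is locally compact Hausdorff), notes that the hypothesis forbids a clopen subset of $Y^{*}$ containing $Z$ and missing $\infty$, and then applies the \v{S}ura-Bura/Kuratowski theorem (in a compact Hausdorff space quasicomponents coincide with components, so two disjoint closed sets not separated by a clopen set are joined by a component) to obtain a continuum in $Y^{*}$ joining $Z$ to $\infty$; a final argument inside that continuum --- via an irreducible subcontinuum and boundary bumping --- yields a connected subset of $Y\setminus Z$ whose closure meets $Z$ and contains $\infty$, i.e.\ is noncompact in $Y$. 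Compactification, the quasicomponent theorem, and boundary bumping are the actual heart of the lemma, and none of them appears in your proposal.
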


\begin{proposition}\label{T.3.1}
Assume that $f$, $g$, $S$ are as above. Given $W\subseteq[0,\infty]\times\t M$ open, 
if $\deg(g,\ck W_0)$ is (defined and) nonzero, then the set 
\[
(S\cap W)\setminus \big\{(0,\h p)\in W:g(p)=0\big\}
\]
of nontrivial starting pairs in $W$, admits a connected subset whose closure in $S\cap W$ 
meets $\big\{(0,\h p)\in W:g(p)=0\big\}$ and is not compact.
\end{proposition}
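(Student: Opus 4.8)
The plan is to apply the abstract connectivity result, Lemma~\ref{L.3.1}, to the locally compact metric space $Y := S \cap W$ with the compact set $Z := \{(0,\h p) \in W : g(p)=0\}$ of trivial starting pairs. To use Lemma~\ref{L.3.1} I must verify two hypotheses: that $Z$ is genuinely a compact subset of $Y$, and that every compact subset of $Y$ containing the slice $Y_0$ has nonempty boundary in $Y$. Once these are checked, the lemma produces a connected subset of $Y \setminus Z$ whose closure in $Y = S\cap W$ meets $Z$ and is noncompact, which is exactly the conclusion. The real content is therefore the verification of the boundary hypothesis, and this is where the degree assumption $\deg(g,\ck W_0) \neq 0$ and the index formula of Theorem~\ref{thgrado} must enter.

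First I would deal with $Y_0$, the slice of $Y$ at $\lambda = 0$. By definition $Y_0 = (S\cap W)_0$ consists of those $\varphi \in W_0$ for which $\xi^0(\varphi,\cdot)$ is $T$-periodic, i.e.\ the fixed points of $Q = Q_0$ lying in $W_0$. I would argue that $Z$ is compact: since $g^{-1}(0) \cap \ck W_0$ is the zero set of $g$ over which $\deg(g,\ck W_0)$ is assumed defined, it is compact, and the embedding $p \mapsto \h p$ is a homeomorphism onto $Z$. I then need the crucial rigidity: the boundary hypothesis of Lemma~\ref{L.3.1} will follow if I can show that no compact set $K \subseteq Y$ containing $Y_0$ is both open and closed in $Y$. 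The standard mechanism (as in \cite{FP93,FS97}) is by contradiction: if such a relatively clopen compact $K$ existed, I could separate it from the rest of $S\cap W$ by an open set $\Omega \subseteq W$ in $[0,\infty)\times \t M$ whose closure in $\V$ is compact, with $\partial\Omega \cap S = \emptyset$, and with $\Omega_0$ an open neighborhood (in $\t M$) of the fixed point set of $Q$ containing no fixed points on its boundary.

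The heart of the argument is then a fixed-point-index computation combined with the Generalized Homotopy Invariance Property. On $\Omega$ I would consider the family of maps $Q_\lambda$; because $\partial\Omega$ contains no starting pairs, the set $\{(\lambda,\varphi) \in \Omega : Q_\lambda(\varphi)=\varphi\}$ is compact, so $\ind(Q_\lambda,\Omega_\lambda)$ is defined and, by Generalized Homotopy Invariance, independent of $\lambda$. Evaluating at $\lambda=0$ and invoking Theorem~\ref{thgrado} gives
\[
\ind(Q_0,\Omega_0) = \deg(-g,\ck{\Omega_0}) = \deg(-g,\ck W_0),
\]
where the last equality uses Excision of the degree together with $\ck{\Omega_0}$ being a neighborhood of $g^{-1}(0)\cap \ck W_0$. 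Now $\deg(-g,\ck W_0) = (-1)^{\dim M}\deg(g,\ck W_0) \neq 0$ by hypothesis. On the other hand, if the separated compact clopen set $K$ could be enclosed so that $\Omega$ misses $S$ at the relevant higher-$\lambda$ slices, a second index computation at some $\lambda_1 > 0$ with $\Omega_{\lambda_1} \cap S = \emptyset$ would force $\ind(Q_{\lambda_1},\Omega_{\lambda_1}) = 0$ by the Solution Property, contradicting the invariance.

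The main obstacle I expect is the careful construction of the separating open set $\Omega$ and the bookkeeping needed to guarantee simultaneously that its boundary avoids $S$, that its closure is compact in $\V$ (so that local compactness of $S$ upgrades to the compactness required for the index to be defined), and that its $\lambda=0$ slice correctly captures $\ck W_0$ up to Excision. This is the delicate topological separation step that makes the nonzero degree incompatible with the existence of a compact clopen set containing $Y_0$; the actual index and degree manipulations, once $\Omega$ is in place, are routine applications of the properties recalled in Section~2 together with Theorem~\ref{thgrado}.
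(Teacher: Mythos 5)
Your overall strategy is the paper's own: apply Lemma~\ref{L.3.1} to $Y=S\cap W$ and $Z=\big\{(0,\h p)\in W:g(p)=0\big\}$ (the paper first replaces $W$ by $U=W\cap\V$, a harmless normalization since $S\cap U=S\cap W$), and rule out a compact, relatively clopen subset of $Y$ by combining the Generalized Homotopy Invariance of the fixed point index with Theorem~\ref{thgrado}, Excision of the degree, and the Solution property at a large value of $\lambda$. The index computations you outline are exactly the paper's. But two points in your execution are genuinely wrong, even though both are repairable without changing the structure. The first is your compactness bookkeeping: you require the separating open set $\Omega$ to have compact closure in $\V$, and you present this as the reason the indices $\ind(Q_\lambda,\Omega_\lambda)$ are defined. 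No nonempty open subset of $[0,\infty)\times\t M$ has compact closure, because $\t M=C([-r,0],M)$ with $r>0$ is infinite dimensional; so this step, taken literally, is impossible. It is also unnecessary: what the Generalized Homotopy Invariance needs is compactness of the fixed point set $\{(\lambda,\varphi)\in\Omega:Q_\lambda(\varphi)=\varphi\}=S\cap\Omega$, and this comes for free from the separation itself. Taking $\Omega$ to be the set of points of $W\cap\V$ whose distance from $K$ is less than $\delta/2$, with $\delta=\dist\big(K,(S\cap W)\setminus K\big)>0$, one gets $S\cap\Omega=K$, which is compact. This is precisely the set $A$ in the paper's proof; no compactness of $\cl{\Omega}$ is ever invoked.

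The second point concerns which hypothesis of Lemma~\ref{L.3.1} you verify. You check that every compact subset of $Y$ containing the slice $Y_0=\fix(Q,W_0)$ has nonempty boundary. The symbol ``$Y_0$'' in the statement of Lemma~\ref{L.3.1} is a misprint (inside the lemma $Y$ is an abstract metric space and $Y_0$ is undefined): the correct hypothesis, as in \cite{FP93} and as verified in the paper's own proof, is that every compact subset of $Y$ containing $Z$ has nonempty boundary. Since $Z\subseteq Y_0$, your version is strictly weaker, and the lemma under your hypothesis is false in general: if $Y$ had a compact clopen piece $C\supseteq Z$ together with a noncompact closed piece containing some point of $Y_0\setminus C$ (say, coming from a nonconstant $T$-periodic orbit of $\dot x=g(x)$), then your hypothesis could hold while the conclusion fails, because any connected subset of $Y\setminus Z$ whose closure meets $Z$ would have its (connected) closure trapped in the compact clopen set $C$. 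Fortunately, your contradiction argument never actually uses $K\supseteq Y_0$: the only place containment enters is the excision step $\deg(-g,\ck \Omega_0)=\deg(-g,\ck W_0)$, which needs $\ck\Omega_0\supseteq g^{-1}(0)\cap\ck W_0$, i.e.\ only $K\supseteq Z$. Replacing ``containing $Y_0$'' by ``containing $Z$'' throughout repairs the logic and makes your argument coincide with the paper's. (Your explicit remark that $\deg(-g,\ck W_0)=(-1)^{\dim M}\deg(g,\ck W_0)\neq 0$ is fine, and makes explicit a sign relation the paper leaves tacit.)
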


\begin{proof}
Let us define the open set $U=W\cap \V$. Since $g^{-1}(0)\cap\ck U_0=g^{-1}(0)\cap\ck W_0$, 
and $S\cap U=S\cap W$, we need to prove that the set of nontrivial starting pairs in $U$ 
admits a connected subset whose closure in $S\cap U$ meets 
$\big\{(0,\h p)\in U:g(p)=0\big\}$ and is not compact.

As pointed out before, $S$ is locally compact, thus, $U$ being open, $S\cap U$ is locally 
compact. Moreover the assumption that $\deg(g,\ck W_0)$ is defined means that the set
\[
 \big\{p\in\ck{W}_0:g(p)=0\big\}=\big\{p\in\ck{U}_0:g(p)=0\big\}
\]
is compact. Thus the homeomorphic set $\{(0,\h p)\in U:g(p)=0\}$ is compact as well.

The assertion will follow applying Lemma \ref{L.3.1} to the pair
\[
(Y,Z)=\left(S\cap U ,\big\{(0,\h p)\in U:p\in g^{-1}(0)\big\}\right).
\]
In fact, if $\Sigma$ is a connected set as in the assertion of Lemma \ref{L.3.1}, its 
closure satisfies the requirement. 

Assume, by contradiction, that there exists a compact subset $C$ of the set $S\cap U$ of 
starting pairs of \eqref{due.uno} in $U$ containing $Z$ and with empty boundary in 
$S\cap U$. Thus $C$ is a relatively open subset of $S\cap U$. As a consequence, 
$(S\cap U)\setminus C$ is closed in $S\cap U$, so the distance, 
$\delta=\dist\big(C,(S\cap U)\setminus C\big)$, between $C$ and $(S\cap U)\setminus C$ is 
nonzero (recall that $C$ is compact). Consider the set
\[
A=\left\{(\lambda,\varphi)\in U: 
   \dist\big((\lambda,\varphi),C\big)<\delta/2\right\},
\]
which, clearly, does not meet $(S\cap U)\setminus C$. 

Because of the compactness of $S\cap U\cap A=C$, there exists $\cl\lambda>0$ such that 
$(\{\cl\lambda\}\times A_{\cl\lambda})\cap S\cap U=\emptyset$. 
Moreover, the set $S\cap U\cap A$ coincides with 
$\{(\lambda,\varphi)\in A:Q_\lambda(\varphi)=\varphi\}$. 
Then, from the Generalized Homotopy Invariance Property of the fixed point index,
\[
0=\ind\big(Q_{\cl\lambda},A_{\cl\lambda}\big)=\ind\big(Q_\lambda,A_\lambda\big),
\]
for all $\lambda\in [0,\cl\lambda]$. But, by Theorem \ref{thgrado} and by the Excision 
Property of the degree, we get
\[
\ind(Q,A_0)=\deg(-g,\ck A_0)=\deg (-g,\ck W_0)\neq 0.
\]
That contradicts the previous formula, since $Q=Q_0$.
\end{proof}

\section{Branches of $T$-periodic pairs}

Let us introduce the function space where most of the work of this section is done.
We will denote by $C_T(M)$ the metric subspace of the Banach space 
$\left( C_T(\R^k)\,,\,\left\| \cdot \right\|\right)$ of all the $T$-periodic 
continuous maps $x:\R\to M$ with the usual $C^0$ norm. Observe that $C_T(M)$ is not 
complete unless $M$ is complete (i.e.\ closed in $\R^k$). Nevertheless, since $M$ is 
locally compact, $C_T(M)$ is always locally complete. 
\smallskip\ 

For the sake of simplicity, we will identify $M$ with its image in 
$[0,\infty)\times C_T(M)$ under the embedding which associates to any $p\in M$ the 
pair $(0,\bar p)$, $\bar p\in C_T(M)$ being the map constantly equal to $p$. 
According to these identifications, if $E$ is a subset of $[0,\infty)\times C_T(M)$, 
by $E\cap M$ we mean the subset of $M$ given by all $p\in M$ such that the pair 
$(0,\bar p)$ belongs to $E$. Observe that if $\Omega\subseteq[0,\infty)\times C_T(M)$ 
is open, then so is $\Omega\cap M$.

A pair $(\lambda,x)\in [0,\infty)\times C_T(M)$, where $x$ a solution of \eqref{ddern},
is called a \emph{$T$-periodic pair} (for \eqref{ddern}). Those $T$-periodic pairs that 
are of the particular form $(0,\cl{p})$ are said to be \emph{trivial}. Observe that
$(0,\cl p)\in [0,\infty)\times C_T(M)$ is a trivial $T$-periodic pair if and only if 
$g(p)=0$. We point out that if $x$ is a nonconstant $T$-periodic solution of the
unperturbed equation $\dot x(t)=g\big(x(t)\big)$, then $(0,x)$ is a nontrivial 
$T$-periodic pair.

We are now in a position to state our main result. The proof is inspired by 
\cite{FS96,FP93}.

\begin{theorem}\label{tuno}
Let $g:M\to\R^k$ be a tangent vector field on $M$ and, given $T>0$, let 
$f:\R\times M\times M\to\R^k$ be $T$-periodic in the first variable and tangent to $M$ 
in the second one. Let $\Omega$ be an open subset of $[0,\infty)\times C_T(M)$, and 
assume that $\deg(g,\Omega\cap M)$ is defined and nonzero. Then $\Omega$ contains a
connected set of nontrivial $T$-periodic pairs whose closure in $\Omega$ meets the 
set $\{(0,\cl p)\in\Omega:g(p)=0\}$ and is not compact. 

In particular, the set of $T$-periodic pairs for \eqref{ddern} contains a connected
component that meets $\{(0,\cl p)\in\Omega:g(p)=0\}$ and whose intersection with $\Omega$
is not compact.
\end{theorem}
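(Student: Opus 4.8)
The plan is to transfer the conclusion of Proposition~\ref{T.3.1}, which lives in the space of starting pairs $[0,\infty)\times\t M$, into the space of $T$-periodic pairs $[0,\infty)\times C_T(M)$. The natural bridge is the map that sends a starting pair $(\lambda,\varphi)$ to the $T$-periodic solution it generates. More precisely, a pair $(\lambda,\varphi)$ is a starting pair exactly when the solution $\xi^\lambda(\varphi,\cdot)$ of \eqref{due.uno} is $T$-periodic, and in that case this solution (extended periodically to all of $\R$) is an element of $C_T(M)$, giving a $T$-periodic pair. Conversely, every $T$-periodic pair $(\lambda,x)$ restricts on $[-r,0]$ to a function $\varphi=x|_{[-r,0]}$ which is a starting pair producing $x$. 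So I would first make this correspondence precise and check it is continuous both ways on the relevant sets, noting that trivial starting pairs $(0,\h p)$ with $g(p)=0$ correspond exactly to trivial $T$-periodic pairs $(0,\cl p)$.

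First I would set up the open set in $[0,\infty)\times\t M$ to which Proposition~\ref{T.3.1} is applied. Given $\Omega\subseteq[0,\infty)\times C_T(M)$ open with $\deg(g,\Omega\cap M)$ defined and nonzero, I would pull $\Omega$ back along the map that assigns to a starting pair its $T$-periodic solution, obtaining an open set $W\subseteq[0,\infty)\times\t M$. The key bookkeeping point is to verify that $\ck W_0$ matches $\Omega\cap M$ up to the zeros of $g$: a point $p\in M$ with $g(p)=0$ gives the trivial $T$-periodic pair $(0,\cl p)$, which lies in $\Omega$ iff $(0,\h p)\in W$, i.e.\ iff $p\in\ck W_0$. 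Since the degree only depends on a neighborhood of $g^{-1}(0)$, this identification suffices to guarantee $\deg(g,\ck W_0)=\deg(g,\Omega\cap M)\neq 0$, so the hypothesis of Proposition~\ref{T.3.1} holds.

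With that in hand, Proposition~\ref{T.3.1} yields a connected set $\Sigma$ of nontrivial starting pairs in $W$ whose closure in $S\cap W$ meets the trivial starting pairs and is not compact. I would then push $\Sigma$ forward through the continuous correspondence to a connected set of nontrivial $T$-periodic pairs in $\Omega$. Connectedness is preserved by continuous images; the image of a nontrivial starting pair is a nontrivial $T$-periodic pair (one must check that nonconstancy is preserved, which follows because the solution determines and is determined by its initial segment). The main obstacle is the noncompactness and closure conditions: I must argue that the forward image has closure in $\Omega$ that is noncompact and still meets $\{(0,\cl p)\in\Omega:g(p)=0\}$. The cleanest route is to show the correspondence is a homeomorphism between $S\cap W$ and the set of $T$-periodic pairs in $\Omega$ (with inverse given by restriction to $[-r,0]$), so that topological properties—connectedness, the meeting of the trivial set, and noncompactness of the closure—transfer verbatim.

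Finally, for the ``in particular'' clause I would invoke a standard component argument: the connected set just produced is contained in a connected component of the full set of $T$-periodic pairs, and since that component contains points meeting $\{(0,\cl p)\in\Omega:g(p)=0\}$ while having noncompact closure in $\Omega$, the component itself meets that set and has noncompact intersection with $\Omega$. The delicate part throughout will be the careful handling of the two distinct notions of closure (in $S\cap W$ versus in $\Omega$) and ensuring the homeomorphism respects boundaries so that ``not compact'' genuinely transfers; this is where I expect the real work to lie, the rest being an essentially formal transport of Proposition~\ref{T.3.1} across the correspondence.
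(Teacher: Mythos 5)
Your argument reproduces, essentially step by step, the first half of the paper's proof: pull $\Omega$ back through the correspondence $(\lambda,x)\mapsto\big(\lambda,x|_{[-r,0]}\big)$ between $T$-periodic pairs and starting pairs, choose an open $W\subseteq[0,\infty)\times\t M$ with $S\cap W$ equal to that pullback, check $\deg(g,\ck W_0)=\deg(g,\Omega\cap M)\neq 0$ by excision, apply Proposition~\ref{T.3.1}, and transport the resulting connected set back. That part is sound. The genuine gap is that this only proves Theorem~\ref{tuno} under an additional smoothness hypothesis that the theorem does not make. The statement assumes $f$ and $g$ merely continuous (in this paper a tangent vector field is by definition a continuous map), whereas everything your proof rests on requires $C^1$ data: Proposition~\ref{T.3.1} is stated under the standing assumption of its section that $f$ and $g$ are $C^1$; the map $\xi^\lambda(\varphi,\cdot)$ you use is well defined only when \eqref{due.uno} has a unique solution; and your ``homeomorphism'' between $S\cap W$ and the periodic pairs in $\Omega$ needs both injectivity (uniqueness of solutions) and continuity of the inverse (continuous dependence on data). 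For merely continuous $f,g$ none of this is available --- two distinct $T$-periodic solutions can share the same initial segment on $[-r,0]$, so the restriction map is not even a bijection, and the approach cannot be repaired by more careful bookkeeping.

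The paper handles the general case by an approximation argument that constitutes the larger half of its proof, and which your proposal is missing entirely. One applies Lemma~\ref{L.3.1} directly to the pair $(Y,Z)=\big(X\cap\Omega,\Upsilon\cap\Omega\big)$ in the space $[0,\infty)\times C_T(M)$, where $X$ is the set of $T$-periodic pairs of \eqref{ddern} and $\Upsilon$ the trivial ones; assuming the hypothesis of that lemma fails, there is a relatively open compact $C\subseteq Y$ containing $Z$, around which one builds a bounded open set $A^\rho$ with $\cl{A^\rho}\subseteq\Omega$, complete closure, and $\partial A^\rho\cap Y=\emptyset$. One then approximates $f,g$ uniformly by smooth tangent fields $f_i,g_i$ with $\deg(g_i,A^\rho\cap M)=\deg(g,\Omega\cap M)\neq 0$, applies the smooth case (your argument) to each approximating equation to obtain connected branches $\Gamma_i$ in $A^\rho$, shows via Ascoli--Arzel\`a and completeness that for large $i$ these branches must reach $\partial A^\rho$, and finally passes to the limit in the approximating equations to produce a $T$-periodic pair of \eqref{ddern} on $\partial A^\rho$, contradicting $\partial A^\rho\cap Y=\emptyset$. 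Without this step (or some substitute for it), your proof establishes the theorem only for $C^1$ perturbations, not as stated.
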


\begin{proof} 
Denote by $X$ the set of $T$-periodic pairs of \eqref{ddern} and by $S$ the set of 
starting pairs of the same equation; that is, of all pairs $\big(\lambda,x|_{[-r,0]}\big)$ 
with $(\lambda,x)\in X$, $x|_{[-r,0]}$ being the restriction to $[-r,0]$ of $x$.

Assume first that $f$ and $g$ are smooth. Define the map $h:X\to S$ by $h(\lambda,x)
=\big(\lambda,x|_{[-r,0]}\big)$ and observe that $h$ is continuous, onto and, since $f$ and 
$g$ are smooth, it is also one to one. Furthermore, by the continuous dependence on data, 
$h^{-1}:S\to X$ is continuous as well.

Take 
\[
S_\Omega=\left\{ (\lambda ,\varphi)\in S:\mbox{the solution of \eqref{ddern} is
contained in }\Omega \right\} . 
\]
So that $X\cap\Omega$ and $S_\Omega$ correspond under the homeomorphism $h:X\to S$. Thus, 
$S_\Omega$ is an open subset of $S$ and, consequently, we can find an open subset $W$ 
of $[0,\infty)\times\t M$ such that $S\cap W=S_\Omega$. This implies
\begin{multline*}
\big\{ p\in\ck W_0:g(p)=0\big\}=\big\{p\in M:(0,\h p)\in W,\; g(p)=0\big\}=\\
=\big\{p\in M:(0,\cl{p})\in\Omega,\; g(p)=0\big\}=\big\{p\in\Omega\cap M:g(p)=0\big\}.
\end{multline*}
Thus, by excision, $\deg(g,\ck W_0)=\deg(g,\Omega\cap M)\neq 0$. Applying Proposition 
\ref{T.3.1}, we get the existence of a connected set 
\[
\Sigma\subseteq ( S\cap W)\setminus\big\{(0,\h p)\in W:g(p)=0\big\} 
\] 
whose closure in $S\cap W$ meets $\big\{(0,\h p)\in W:g(p)=0\big\}$ and is not compact. 

Observe that the trivial $T$-periodic pairs correspond to the trivial starting 
pairs under the homeomorphism $h$. 
Thus, $\Gamma = h^{-1}(\Sigma)\subseteq X\cap\Omega$ is a connected set of nontrivial 
$T$-periodic pairs whose closure in $X\cap\Omega$ meets $\{(0,\cl p)\in\Omega:g(p)=0\}$
and is not compact. Since $X$ is closed in $[0,\infty)\times C_T(M)$, the closures of
$\Gamma$ in $X\cap\Omega$ and in $\Omega$ coincide. This proves that $\Gamma$
satisfies the requirements of the first part of the assertion.\smallskip\ 

Let us remove the smoothness assumption on $g$ and $f$. As above, it is enough to show 
the existence of a connected set $\Gamma$ of nontrivial $T$-periodic pairs whose closure 
in $X\cap\Omega$ meets $\{(0,\cl p)\in\Omega:g(p)=0\}$ and is not compact.

Observe that the closed subset $X$ of $[0,\infty)\times C_T(M)$ is locally compact because 
of Ascoli-Arzel\`a Theorem. It is convenient to introduce the following subset of $X$:
\[
\Upsilon=\big\{(0,\cl{p})\in [0,\infty)\times C_T(M):g(p)=0\big\}.
\]
Take 
\[
Y=X\cap\Omega\quad\text{and}\quad Z=\Upsilon\cap\Omega
\] 
and notice that $Y$ is locally compact as an open subset of $X$. Moreover, $Z$ is a compact 
subset of $Y$ (recall that, by assumption, $\deg(g,M\cap\Omega)$ is defined). Since $Y$ is 
closed in $\Omega$, we only have to prove that the pair $(Y,Z)$ satisfies the hypothesis of 
Lemma \ref{L.3.1}. Assume the contrary. Thus, we can find a relatively open compact subset 
$C$ of $Y$ containing $Z$. Similarly to the proof of Proposition \ref{T.3.1}, given 
$0<\rho<\dist (C,Y\setminus C)$, we consider the set $A^\rho$ of all pairs 
$(\lambda,\varphi)\in\Omega$ whose distance from $C$ is smaller than $\rho$. Thus, 
$A^\rho\cap Y=C$ and $\partial A^\rho\cap Y=\emptyset$. We can also assume that the closure 
$\cl{A^\rho}$ of $A^\rho$ in $[0,\infty)\times C_T(M)$ is contained in $\Omega$. Since $C$ is 
compact and $[0,\infty)\times M$ is locally compact, we can take $A^\rho$ in such a way that 
the set 
\[
\left\{ \big(\lambda \,,\,x(t),x(t-r)\big) \in [ 0,\infty)\times M\times M:
               (\lambda ,x)\in A^\rho,\; t\in [0,T]\right\} 
\]
is contained in a compact subset of $[0,\infty)\times M\times M$. This implies that $A^\rho$ is 
bounded with complete closure and $A^\rho\cap M$ is a relatively compact subset of $\Omega\cap M$. 
In particular $g$ is nonzero on the boundary of $A^\rho\cap M$ (relative to $M$). By well known 
approximation results on manifolds, we can find sequences $\{g_i\} $ and $\{ f_i\} $ of smooth 
maps uniformly approximating $g$ and $f$, and such that the following properties hold for all 
$i\in\N$:
\begin{list}{$\bullet$}{\setlength{\itemsep}{3pt}%
                         \setlength{\topsep}{3pt}%
                         \setlength{\partopsep}{0pt}%
                         \setlength{\parsep}{0pt}%
                         \setlength{\parskip}{0pt}}
 \item  $g_i(p)\in T_pM$ for all $p\in M$;
 \item  $f_i(t,p,q)\in T_pM$ for all $(t,p,q)\in\R\times M\times M$;
 \item  $f_i$ is $T$-periodic in the first variable.
\end{list}

\medskip
For $i\in\N$ large enough, we get 
\[
\deg (g_i,A^\rho\cap M)=\deg (g,A^\rho\cap M). 
\]
Furthermore, by excision, 
\[
\deg (g,A^\rho\cap M)=\deg (g,\Omega \cap M)\neq 0. 
\]
Therefore, given $i$ large enough, the first part of the proof can be applied to 
the equation 
\begin{equation}
\dot x(t)=g_i\big(x(t)\big)+\lambda f_i\big(t,x(t),x(t-r)\big).  \label{due}
\end{equation}
Let $X_i$ denote the set of $T$-periodic pairs of \eqref{due} and put
\[
\Upsilon_i=\big\{(0,\cl{p})\in [0,\infty)\times C_T(M):g_i(p)=0\big\}.
\]
Because of the first part of the proof, there exists a connected subset $\Gamma_i$ of 
$A^\rho$ whose closure in $A^\rho$ meets $\Upsilon_i\cap A^\rho$ and is not compact. Let 
us denote by $\cl\Gamma_i$ and $\cl{A^\rho}$ the closures in $[0,\infty)\times C_T(M)$ of 
$\Gamma_i$ and $A^\rho$, respectively. 

Let us show that, for $i$ large enough, $\cl\Gamma_i\cap\partial A^\rho\neq\emptyset$. 
Thus, $X_i$ being closed, we get $\cl{\Gamma}_i\subseteq X_i$. This will imply the existence
of a $T$-periodic pair $(\lambda_i,x_i)\in\partial A^\rho$ of \eqref{due}. It is 
enough to prove that $\cl\Gamma_i$ is compact. In fact, if this is true and if we assume 
$\cl\Gamma_i\cap\partial A^\rho=\emptyset$, then $\cl\Gamma_i\subseteq A^\rho$ which implies that 
the closure of $\Gamma_i$ in $A^\rho$ coincides with the compact set $\cl\Gamma_i$, and this 
is a contradiction. The compactness of $\cl\Gamma_i$, for $i$ large enough, follows from 
the completeness of $\cl{A^\rho}$ and the fact that, by the Ascoli-Arzel\`a Theorem, 
$\cl\Gamma_i$ is totally bounded, when $i$ is sufficiently large. Thus, for $i$ large 
enough, there exists a $T$-periodic pair $(\lambda_i,x_i)\in\partial A^\rho$ of \eqref{due}. 

Again by Ascoli-Arzel\`a Theorem, we may assume that $x_i\to x_0$ in $C_T(M)$ and 
$\lambda_i\to\lambda_0$ with $(\lambda_0,x_0)\in\partial A^\rho$. Passing to the 
limit in equation \eqref{due}, it is not difficult to show that $(\lambda_0,x_0)$ 
is a $T$-periodic pair of \eqref{ddern} in $\partial A^\rho$. This contradicts the assumption 
$\partial A^\rho\cap Y=\emptyset$ and proves the first part of the assertion.

Let us prove the last part of the assertion. Consider the connected component $\Xi$ 
of $X$ that contains the connected set $\Gamma$ of the first part of the assertion.
We shall now show that $\Xi$ has the required properties. Clearly, $\Xi$ meets the set
$\big\{(0,\cl{p})\in\Omega:g(p)=0\big\}$ because the closure $\cl{\Gamma}^\Omega$ of 
$\Gamma$ in $\Omega$ does. Moreover, $\Xi\cap\Omega$ cannot be compact, since 
$\Xi\cap\Omega$, as a closed subset of $\Omega$, contains $\cl{\Gamma}^\Omega$, and $\cl{\Gamma}^\Omega$ is not compact.
\end{proof}

The following corollary, in the case of a compact boundaryless manifolds, extends 
a result of \cite{BCFP07} in which $g$ is identically zero.

\begin{corollary}
 Let $f$ and $g$ be as in Theorem \ref{tuno} and let $M\subseteq\R^k$ be compact with 
nonzero Euler-Poincar\'e characteristic $\chi(M)$. Then, there exists an unbounded connected 
set of nontrivial $T$-periodic pairs whose closure meets 
$\{(0,\cl p)\in[0,\infty)\times C_T(M):g(p)=0\}$. 
In particular, equation \eqref{due.uno} has a solution for any $\lambda\geq 0$.
\end{corollary}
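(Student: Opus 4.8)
The plan is to apply Theorem \ref{tuno} with the whole ambient space $\Omega=[0,\infty)\times C_T(M)$ as the open set, and then to upgrade the non-compactness produced by that theorem into genuine unboundedness by means of an a priori estimate that exploits the compactness of $M$. First I would check the hypothesis: since $M$ is compact, the Poincar\'e--Hopf Theorem recalled in Section 2 gives $\deg(g,M)=\chi(M)\neq 0$. Under the identification of Section 5 one has $\Omega\cap M=M$, so $\deg(g,\Omega\cap M)=\chi(M)$ is defined (because $g^{-1}(0)\cap M$ is a closed subset of the compact set $M$) and nonzero. Theorem \ref{tuno} then supplies a connected set $\Gamma$ of nontrivial $T$-periodic pairs whose closure in $\Omega$ --- which, $\Omega$ being the whole space, is its closure in $[0,\infty)\times C_T(M)$ --- meets $\{(0,\cl p):g(p)=0\}$ and is not compact.

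The main point is to show that this $\Gamma$ is actually unbounded, and here is where the compactness of $M$ does the essential work. I would first establish the a priori bound: for any fixed $\Lambda\geq 0$, the set of $T$-periodic pairs $(\lambda,x)$ with $0\leq\lambda\leq\Lambda$ is relatively compact in $[0,\infty)\times C_T(M)$. Indeed, each such $x$ takes values in the compact set $M$, so $\|x\|\leq\max_{p\in M}|p|$; moreover, from the equation $\dot x(t)=g(x(t))+\lambda f(t,x(t),x(t-r))$ and the continuity of $g$ and $f$ on the compact sets $M$ and $[0,T]\times M\times M$, the derivatives $\dot x$ are bounded uniformly by a constant depending only on $\Lambda$. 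Hence this family is equi-Lipschitz, uniformly bounded, and so relatively compact in $C_T(M)$ by the Ascoli--Arzel\`a Theorem; together with the compact interval $[0,\Lambda]$ for the parameter, the pairs lie in a compact subset of $[0,\infty)\times C_T(M)$. The expected main obstacle is precisely packaging this estimate correctly: without the compactness of $M$ (i.e. in the general setting of Theorem \ref{tuno}) ``not compact'' need not force unboundedness, so this is the step that genuinely uses the extra hypothesis.

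With the estimate in hand, the unboundedness follows by contradiction. Recall that $M$ compact implies $M$ closed in $\R^k$, so $C_T(M)$, and hence $[0,\infty)\times C_T(M)$, is complete; moreover the set $X$ of $T$-periodic pairs is closed, so $\cl{\Gamma}\subseteq X$ is closed in the ambient space. If the parameter component of $\Gamma$ were bounded, say contained in some $[0,\Lambda]$, then $\cl{\Gamma}$ would be a closed subset of the relatively compact set just described, hence compact --- contradicting Theorem \ref{tuno}. Therefore $\lambda$ is unbounded on $\Gamma$, so $\Gamma$ is unbounded, and $\Gamma$ itself is the connected set of nontrivial $T$-periodic pairs required by the first assertion.

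Finally I would derive the ``in particular'' clause by projecting onto the parameter. The continuous projection $(\lambda,x)\mapsto\lambda$ sends $\Gamma$ to a connected subset $I$ of $[0,\infty)$, hence an interval, which is unbounded above by the previous paragraph; since $\cl{\Gamma}$ contains a point $(0,\cl p)$, we have $0\in\cl{I}$, so $\inf I=0$ and therefore $(0,\infty)\subseteq I$. Thus for every $\lambda>0$ there is a pair $(\lambda,x)\in\Gamma$, i.e. a $T$-periodic solution of \eqref{ddern}, equivalently a solution of \eqref{due.uno}. For $\lambda=0$, $\deg(g,M)=\chi(M)\neq 0$ forces (by the Solution property of the degree) a zero $p$ of $g$, whose associated constant function solves \eqref{due.uno}. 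Hence \eqref{due.uno} admits a solution for every $\lambda\geq 0$.
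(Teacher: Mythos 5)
Your proposal is correct and follows essentially the same route as the paper's proof: apply Theorem \ref{tuno} with $\Omega=[0,\infty)\times C_T(M)$ after invoking the Poincar\'e--Hopf Theorem, use completeness of $[0,\infty)\times C_T(M)$ together with the Ascoli--Arzel\`a Theorem to upgrade non-compactness of the closure to unboundedness, and exploit the boundedness of $C_T(M)$ to deduce that the parameter projection covers all of $[0,\infty)$. You simply spell out the a priori estimate and the interval/projection argument that the paper leaves implicit.
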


\begin{proof}
Since $M$ is compact, $[0,\infty)\times C_T(M)$ is a complete metric space.
Moreover, the Ascoli-Arzel\`a Theorem implies that any bounded set of $T$-periodic
pairs is totally bounded. The Poincar\'e-Hopf Theorem yields $\deg(g,M)=\chi(M)\neq 0$. 
Thus, Theorem \ref{tuno} implies the existence of an unbounded connected set $\Gamma$ of 
nontrivial $T$-periodic pairs whose closure in $[0,\infty)\times C_T(M)$ meets 
$\{(0,\cl p)\in[0,\infty)\times C_T(M):g(p)=0\}$. The last assertion follows from
the fact that $C_T(M)$ is bounded while $\Gamma$ is unbounded.
\end{proof}

\begin{corollary}\label{cor1}
Let $f$ and $g$ be as in Theorem \ref{tuno}. Assume that $M$ is closed as a subset 
of $\R^k$. Let $\Omega\subseteq [0,\infty)\times C_T(M)$ be open and such that 
$\deg(g,\Omega\cap M)$ is defined and nonzero. Then there exists a connected component 
$\Gamma$ of $T$-periodic pairs that meets $\big\{(0,\cl p)\in\Omega:g(p)=0\big\}$ and 
cannot be both bounded and contained in $\Omega$. In particular, if $\Omega$ is 
bounded, then $\Gamma\cap \partial\Omega\neq\emptyset$.
\end{corollary}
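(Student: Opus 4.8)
The plan is to read off the component $\Gamma$ directly from the ``in particular'' part of Theorem \ref{tuno} and then to convert the stated non-compactness of $\Gamma\cap\Omega$ into the desired dichotomy, exploiting crucially that the hypothesis ``$M$ closed in $\R^k$'' makes $C_T(M)$, and hence $[0,\infty)\times C_T(M)$, a complete metric space. First I would apply Theorem \ref{tuno}: since $\deg(g,\Omega\cap M)$ is defined and nonzero, the set $X$ of $T$-periodic pairs of \eqref{ddern} contains a connected component $\Gamma$ that meets $\big\{(0,\cl p)\in\Omega:g(p)=0\big\}$ and whose intersection $\Gamma\cap\Omega$ is not compact. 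This is precisely the $\Gamma$ of the assertion, so it remains only to show that $\Gamma$ cannot be simultaneously bounded and contained in $\Omega$.

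The core step is to verify that a bounded component of $X$ is compact. Suppose, toward a contradiction, that $\Gamma$ is both bounded and contained in $\Omega$, so that $\Gamma\cap\Omega=\Gamma$. Being a connected component, $\Gamma$ is closed in $X$; and $X$ is closed in $[0,\infty)\times C_T(M)$, which is complete because $M$ is closed. Hence $\Gamma$ is itself complete. On the other hand, if $\Gamma$ is bounded then its $\lambda$-components lie in a compact interval and its $x$-components lie in a bounded, hence (as $M$ is closed) relatively compact, subset of $M$; substituting this into \eqref{ddern} bounds the derivatives $\dot x$ uniformly, so the family is equicontinuous and, by the Ascoli-Arzel\`a Theorem, totally bounded. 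A complete, totally bounded set is compact, whence $\Gamma=\Gamma\cap\Omega$ is compact, contradicting Theorem \ref{tuno}. This proves the first assertion.

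Finally I would deduce the ``in particular'' clause by a connectedness argument. Assume $\Omega$ is bounded and, contrary to the claim, $\Gamma\cap\partial\Omega=\emptyset$. Then $\Gamma$ is the disjoint union of the relatively open sets $\Gamma\cap\Omega$ and $\Gamma\setminus\cl\Omega$; since $\Gamma$ is connected and meets $\Omega$ (it contains a trivial pair lying in $\Omega$), the second set must be empty, so $\Gamma\subseteq\cl\Omega=\Omega$. But then $\Gamma$ is contained in the bounded set $\Omega$ and is therefore bounded, contradicting the first assertion; hence $\Gamma\cap\partial\Omega\neq\emptyset$. The only delicate point is the compactness step of the middle paragraph: it is exactly there that the hypothesis that $M$ is closed is indispensable, since it is what upgrades the total boundedness supplied by Ascoli-Arzel\`a to genuine compactness of the closed set $\Gamma$.
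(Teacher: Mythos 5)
Your proof is correct and takes essentially the same approach as the paper's: apply Theorem \ref{tuno} to get a component $\Gamma$ with $\Gamma\cap\Omega$ non-compact, use completeness of $[0,\infty)\times C_T(M)$ (from $M$ closed) together with the Ascoli-Arzel\`a total-boundedness of bounded sets of $T$-periodic pairs to show that a bounded $\Gamma$ contained in $\Omega$ would be compact (a contradiction), and then get the boundary statement by connectedness. The only blemish is the written equality $\cl\Omega=\Omega$, which is false for an open, non-closed $\Omega$; the correct (and clearly intended) inference is that $\Gamma\subseteq\cl\Omega$ combined with the assumption $\Gamma\cap\partial\Omega=\emptyset$ yields $\Gamma\subseteq\Omega$.
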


\begin{proof}
Since $M$ is a closed subset of $\R^k$, $[0,\infty)\times C_T(M)$ is complete.
Moreover, the Ascoli-Arzel\`a Theorem implies that any bounded set of $T$-periodic
pairs is totally bounded. Thus, the first part of the assertion follows from Theorem 
\ref{tuno}. The last part of the assertion follows from the fact that $\Gamma$ is 
connected and that $\emptyset\neq\Gamma\cap\Omega\neq\Gamma$.
\end{proof}

\medskip
To better understand the meaning of Corollary \ref{cor1}, consider for example the 
case when $M=\R^m$. If $g^{-1}(0)$ is compact and $\deg (g,\R^m)\neq 0$, then there 
exists an unbounded connected set of $T$-periodic pairs in $[0,\infty)\times C_T(\R^m)$ 
which meets the set $\{(0,\cl p)\in[0,\infty)\times C_T(M):g(p)=0\}$, that can be
identified with $g^{-1}(0)$. The existence of this unbounded connected set cannot be 
destroyed by a particular choice of $f$. However it is possibly ``completely vertical'',
i.e.\ contained in the slice $\{0\}\times C_T(M)$. This peculiarity is exhibited,
for instance, by the set of $T$-periodic pairs of the equation
\begin{equation*}
\left\{
\begin{array}{l}
\dot x=y,\\
\dot y=-x +\lambda \sin t,
\end{array}\right.
\end{equation*}
where $M=\R^2$ and $T=2\pi$.

A somewhat opposite behavior is shown by the set $X$ of $T$-periodic pairs for \eqref{ddern} 
in the ``degenerate'' situation when $f(t,p,q)\equiv0$. In this case, $X$ consists 
of the pairs $(\lambda,x)$, where $\lambda\geq 0$ and $x$ is a $T$-periodic solution to 
$\dot x=g(x)$.
In particular, given any $p \in M$ such that $g(p)=0$, the connected component $\Gamma$ of $X$ containing $\{0\} \times \cl p$  contains the ``horizontal'' set $[0,+\infty) \times \{\cl p\}$ and, clearly, satisfies the requirement of Corollary \ref{cor1}.


\begin{thebibliography}{OOOOO}
\small
\addtolength{\parskip}{-4pt}

\bibitem[BCFP07]{BCFP07}\textsc{P.\ Benevieri, A.\ Calamai, M.\ Furi, M.\ P.\ Pera},
\textit{Global branches of periodic solutions for forced delay differential equations
on compact manifolds}, J.\ Differential Equations \textbf{233} (2007), 404--416.

\bibitem[Bo67]{Bo}\textsc{K.\ Borsuk}, \textit{Theory of retracts}, Polish Sci.\ Publ.,
Warsaw, 1967.

\bibitem[Br70]{Br}\textsc{R.\ F.\ Brown}, \textit{An elementary proof of the uniqueness 
of the fixed point index}, Pacific J.\ Math.\ \textbf{35} (1970), 549--558.

\bibitem[CMZ92]{CMZ}\textsc{A. Capietto, J. Mawhin,  F. Zanolin}, \textit{Continuation 
theorems for periodic perturbations of autonomous systems},  Trans.\ Amer.\ Math.\ Soc.\ 
\textbf{329} (1992), 41--72.

\bibitem[Ee66]{Ee66}\textsc{J.\ Eells}, \textit{A setting for global analysis}, Bull.\
Amer.\ Math.\ Soc.\ \textbf{72} (1966), 751--807.

\bibitem[EF76]{EF76}\textsc{J.\ Eells, G.\ Fournier}, \textit{La th\'eorie des points
fixes des applications \`a it\'er\'ee condensante}, Bull.\ Soc.\ Math.\ France 
\textbf{46} (1976), 91--120.

\bibitem[Fr07]{Fr07}\textsc{M.\ Franca}, \textit{Private communication}, June 2007.

\bibitem[FP93]{FP93}\textsc{M.\ Furi,  M.\ P.\ Pera}, \textit{A continuation principle 
for periodic solutions of forced motion equations on manifolds and application to 
bifurcation theory}, Pacific J. Math. \textbf{160} (1993), 219--244.

\bibitem[FPS04]{FPS04}\textsc{M.\ Furi,  M.\ P.\ Pera, M.\ Spadini}, \textit{On the 
uniqueness of the fixed point index on differentiable manifolds}, Fixed Point Theory 
and Applications, 2004 (2004) n.\ 4, 251--259.

\bibitem[FPS05]{FPS05}\textsc{M.\ Furi, M.\ P.\ Pera, M.\ Spadini}, \textit{The fixed 
point index of the Poincar\'e operator on differentiable manifolds}, Handbook of topological 
fixed point theory, Brown R.\ F., Furi M., G\'orniewicz L., Jiang B.\ (Eds.), Springer, 2005.

\bibitem[FS96]{FS96}\textsc{M.\ Furi,  M.\ Spadini}, \textit{On the fixed point index 
of the flow  and applications to periodic solutions of differential equations on
manifolds,} Boll. Un. Mat. Ital. (7) 10-A (1996), 333--346.

\bibitem[FS97]{FS97} \textsc{M.\ Furi, M.\ Spadini}, \textit{On the set of harmonic 
solutions of periodically perturbed autonomous differential equations on manifolds},
Nonlinear Analysis TMA Vol. 29 (1997) n. 8, 963--970.

\bibitem[Gr72]{G}\textsc{A.\ Granas}, \textit{The Leray-Schauder index and the fixed point 
theory for arbitrary ANR's},  Bull Soc. Math. France \textbf{100} (1972), 209--228.

\bibitem[GD03]{DuGr}\textsc{A.\ Granas, J.\ Dugundji}, \textit{Fixed point 
theory},  Springer-Verlag, New York, 2003.

\bibitem[HL93]{HL}\textsc{J.\ K.\ Hale, S.\ M.\ Verduyn Lunel}, \textit{Introduction 
to Functional Differential Equations}, Applied Mathematical Sciences 99, Springer-Verlag
Berlin, 1993.

\bibitem[Hi76]{H}\textsc{M.\ W.\ Hirsch}, \textit{Differential topology},  Graduate Texts 
in Math. Vol. 33, Springer Verlag, Berlin, 1976.

\bibitem[Kr68]{Kr}\textsc{M. A. Krasnosel'ski\u\i}, \textit{Translation Along Trajectories 
of Differential Equations}, Transl. Math. Monographs vol. 19, Amer. Math. Soc. Providence, 
R. I. 1968.


\bibitem[Mi65]{Mi}\textsc{J.\ W.\ Milnor}, \textit{Topology from the differentiable 
viewpoint}, Univ. press of Virginia, Charlottesville, 1965.

\bibitem[Nu91]{N}\textsc{R.\ D.\ Nussbaum}, \textit{The fixed point index and fixed points 
theorems}, C.I.M.E. course on topological methods for ordinary differential equations, 
eds M.\ Furi and P.\ Zecca, Lecture notes in Math. 1537, Springer Verlag 1991, 143--205. 

\bibitem[Ol69]{Ol69}\textsc{W.\ M.\ Oliva}, \textit{Functional differential equations
on compact manifolds and an approximation theorem}, J.\ Differential Equations \textbf{5}
(1969), 483--496.

%


\end{thebibliography}
\end{document}